\documentclass{amsart}
\usepackage{amssymb,amsfonts,amsmath,amsthm,amscd,stmaryrd,dsfont,esint,hyperref,upgreek,xcolor}
\usepackage[title]{appendix}
\usepackage[mathcal]{euscript}

\usepackage{fullpage}
\setlength{\parskip}{\smallskipamount}

\theoremstyle{plain}
\newtheorem{thm}{Theorem}
\newtheorem{cor}{Corollary}
\newtheorem{lem}[cor]{Lemma}

\theoremstyle{definition}
\newtheorem{defn}[cor]{Definition}

\usepackage{biblatex}
\addbibresource{convex-hj-periodic.bib}

{}
{}
\DeclareMathOperator*{\argmax}{arg\,max}
\DeclareMathOperator*{\argmin}{arg\,min}

\DeclareMathOperator{\Lip}{Lip}

\newcommand{\R}{\mathbb{R}}
\newcommand{\C}{\mathbb{C}}
\newcommand{\Z}{\mathbb{Z}}
\newcommand{\N}{\mathbb{N}}
\newcommand{\Q}{\mathbb{Q}}

\renewcommand{\tilde}{\widetilde}

%% todo notes

\title{A near-optimal rate of periodic homogenization for convex Hamilton-Jacobi equations}
\author{William Cooperman}
\begin{document}
\begin{abstract}
    We consider a Hamilton-Jacobi equation where the Hamiltonian is periodic in space and coercive and convex in momentum. Combining the representation formula from optimal control theory and a theorem of Alexander, originally proved in the context of first-passage percolation, we find a rate of homogenization which is within a log-factor of optimal and holds in all dimensions.
\end{abstract}
\maketitle
\section{Introduction}
Let the Hamiltonian $H \colon \R^d \times \R^d \to \R$ be continuous, $\Z^d$-periodic in the first variable, $x$, and coercive in the second variable, $p$. We assume that the coercivity is uniform in $x$; that is, \[ \liminf_{|p| \to \infty} \inf_{x \in \R^d} H(x, p) = +\infty. \]
Let $u_0 \colon \R^d \to \R$ be continuous. Our goal is to study, as $\varepsilon \to 0^+$, the behavior of the unique viscosity solution $u^\varepsilon \colon \R_{\geq 0} \times \R^d \to \R$ to the initial-value problem
\begin{equation}\label{eps-cauchy}
    \begin{cases}
        D_t u^\varepsilon(t, x) + H(\frac{x}{\varepsilon}, D_x u^\varepsilon(t, x)) = 0 \qquad &\text{in $\R_{>0} \times \R^d$}\\
        u^\varepsilon(0, x) = u_0(x) \qquad &\text{in $\R^d$}.
    \end{cases}
\end{equation}
Lions\textendash{}Papanicolaou\textendash{}Varadhan~\cite{LPV} proved that $u^\varepsilon \to \overline{u}$ locally uniformly as $\varepsilon \to 0^+$, where $\overline{u} \colon \R_{\geq 0} \times \R^d \to \R$ is the solution to the effective problem
\begin{equation}\label{macro-cauchy}
    \begin{cases}
        D_t \overline{u}(t, x) + \overline{H}(D_x \overline{u}(t, x)) = 0 \qquad &\text{in $\R_{>0} \times \R^d$}\\
        \overline{u}(0, x) = u_0(x) \qquad &\text{in $\R^d$}.
    \end{cases}
\end{equation}
Here, $\overline{H} \colon \R^d \to \R$ is called the effective Hamiltonian; we define $\overline{H}(p)$ as the unique constant such that the cell problem
\begin{equation}\label{cell-problem}
    H(x, p + D_x v_p) = \overline{H}(p)
\end{equation}
has some $\Z^d$-periodic continuous viscosity solution $v_p \colon \R^d \to \R$, called a corrector.

Our main result is the following rate of convergence, under additional assumptions on $H$ and $u_0$.
\begin{thm}
    If $H$ is convex in $p$ and $u_0$ is Lipschitz, then there is a constant $C = C(H, \Lip(u_0)) > 0$ such that, for all $t > 0$ and $x \in \R^d$, \[ |u^\varepsilon(t, x) - \overline{u}(t, x)| \leq C\varepsilon\log(C + t\varepsilon^{-1}). \]
\end{thm}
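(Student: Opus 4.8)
Because $H$ is convex in $p$, the plan is to replace the PDE by its optimal-control representation, reduce the theorem to a quantitative shape theorem for a deterministic subadditive action, and then invoke Alexander's theorem to produce the logarithmic rate. Let $L(x,q) = \sup_{p}(p\cdot q - H(x,p))$ be the Lagrangian; since $H$ is continuous, $\Z^d$-periodic in $x$, coercive and convex in $p$, the function $L$ is finite-valued, continuous, $\Z^d$-periodic in $x$, convex and superlinear in $q$. Standard theory gives the Lax--Oleinik formula
\[ u^\varepsilon(t,x) = \inf\Big\{ u_0(\gamma(0)) + \int_0^t L\Big(\tfrac{\gamma(s)}{\varepsilon},\dot\gamma(s)\Big)\,ds \ :\ \gamma \text{ absolutely continuous},\ \gamma(t)=x\Big\}, \]
while, using the standard facts that $\overline H$ is again continuous, convex and coercive, the homogenized solution is $\overline u(t,x) = \inf_{y}\big(u_0(y) + t\,\overline L(\tfrac{x-y}{t})\big)$ with $\overline L := \overline H^{*}$. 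Introduce the action $g(T,z) = \inf\{\int_0^T L(\gamma,\dot\gamma)\,ds : \gamma(0)=0,\ \gamma(T)=z\}$ for $T>0$, $z\in\R^d$. Parabolic rescaling together with the $\Z^d$-periodicity of $L$ (which lets one move the initial endpoint to the nearest lattice point at cost $O(1)$) gives
\[ u^\varepsilon(t,x) = \inf_{y}\Big(u_0(y) + \varepsilon\, g\big(\tfrac{t}{\varepsilon},\tfrac{x-y}{\varepsilon}\big)\Big) + O(\varepsilon) \qquad\text{uniformly, for } t\ge \varepsilon. \]
Since $u_0$ is Lipschitz, a finite-speed-of-propagation bound (using superlinearity of $L$) restricts the relevant $y$ to $|x-y| \le C_0 t$; hence it is enough to prove
\[ \big|\,g(T,z) - T\,\overline L(\tfrac{z}{T})\,\big| \le C\log(C+T), \qquad T\ge 1,\quad |z|\le C_0 T. \]

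The key structural facts about $g$ are: it is subadditive up to an additive $O(1)$ (concatenate curves, shift to lattice points), it is $\Z^d$-periodic in $z$, and on the cone $\{T\ge1,\ |z|\le C_0 T\}$ it is Lipschitz in $(T,z)$ with a uniform constant (reroute over a time interval of length $1$). Consequently $h(T,z) := \lim_n n^{-1} g(nT,nz)$ exists, is convex and positively $1$-homogeneous, coincides with $T\,\overline L(z/T)$ (this is precisely the statement that $\overline L$ is the homogenized Lagrangian, dual to $\overline H$), and --- this is the point --- $g(T,z) \ge h(T,z) - C$. Unwinding the rescaling, this lower bound already yields $u^\varepsilon \ge \overline u - C\varepsilon$ with no logarithm; equivalently, the corrector $v_p$, which is Lipschitz because $\overline H$ is coercive, makes $x\mapsto p\cdot x + v_p(x)$ an exact solution of $H(x,D\phi)=\overline H(p)$ and hence a pointwise lower bound for every action. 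Thus the whole content of the theorem is the matching \emph{upper} bound $g(T,z) \le h(T,z) + C\log(C+T)$.

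The first ingredient toward the upper bound is an a priori \emph{sub-optimal} rate $g(T,z)\le h(T,z) + C(T+|z|)^{1-\delta}$ for some $\delta\in(0,1)$, equivalently $|u^\varepsilon-\overline u|\le C\varepsilon^{\delta}$. This comes from a quantified perturbed test function argument (a mollified corrector used as a barrier), in the style of Capuzzo-Dolcetta--Ishii: mollify $v_p$ at a scale $\eta$, bound the defect by which $x\mapsto p\cdot x + (v_p)_\eta(x)$ fails to solve the cell problem in terms of the modulus of continuity of $H$ in $x$, and optimize $\eta$ against $\varepsilon$. This is the only place the fine structure of $H$ beyond convexity and coercivity enters, and only through a modulus of continuity.

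The final step is to run Alexander's theorem. In the dictionary between first-passage percolation and the present problem, $g$ plays the role of the mean passage time, $h$ the role of the limiting shape, the spatial phase (which cell of the lattice one sits in) plays the role of the randomness, and the sub-optimal rate of the previous step plays the role of the a priori fluctuation estimate; Alexander's bootstrap then passes through the $\sim\log T$ dyadic scales between the unit cell and scale $T$, losing only a bounded amount per scale, to give $g(T,z)\le h(T,z)+C\log(C+T)$ on $\{|z|\le C_0T\}$. Combined with the free lower bound and the reduction above, this yields $|u^\varepsilon(t,x)-\overline u(t,x)|\le C\varepsilon\log(C+t\varepsilon^{-1})$ for $t\ge\varepsilon$, and for $t\le\varepsilon$ the estimate is immediate from $|u^\varepsilon(t,\cdot)-u_0|+|\overline u(t,\cdot)-u_0|\le Ct\le C\varepsilon$. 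I expect the main obstacle to be exactly this last step: one must verify the hypotheses of Alexander's theorem in a deterministic periodic setting rather than a random one, and in particular control the interaction of the dyadic bootstrap with the geometry of $\overline L$ --- which, unlike a typical first-passage limit shape, need not be strictly convex and may even be piecewise affine --- while making sure the a priori rate is supplied in the form Alexander's argument requires.
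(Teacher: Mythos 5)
Your high-level outline agrees with the paper: both proofs pass to the Lax--Oleinik/control representation, encode the action $m(t,0,x)$ as a subadditive function on the cone $\{|x|\le Ct\}\subset\R^{d+1}$, and then invoke the deterministic form of Alexander's approximation theorem to get the $\log$-rate. Your observations that $m$ is subadditive up to $O(1)$, Lipschitz on the cone, and that $h(T,z)=T\,\overline L(z/T)$ are all correct and all appear (in some form) in the paper.

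However, there is a genuine misunderstanding in how you propose to invoke Alexander's theorem, and it is exactly the step you flag as the ``main obstacle.'' You treat Alexander's result as a bootstrap that needs an a priori power-law fluctuation bound $g\le h+C(T+|z|)^{1-\delta}$ as input, and you propose to supply this by a quantified perturbed-test-function (Capuzzo-Dolcetta--Ishii) argument with a mollified corrector. That is the architecture of the \emph{stochastic} version of Alexander's method in first-passage percolation, where a concentration inequality plays the role of that a priori input. In the deterministic periodic setting used here, no a priori rate is needed at all: the hypotheses of the relevant theorem (Alexander~\cite{Alexander97}, as stated and proved in Section 3 of the paper) are purely structural --- subadditivity, linear growth, and the \emph{approximate geodesics} property, the last of which you never mention. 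Approximate geodesics are supplied directly by the existence of minimizers with the uniform Lipschitz bound~(\ref{speed-limit}), so that a minimizing path for $m(nt,0,nx)$ can be chopped into bounded increments, each piece approximately an endpoint-to-endpoint geodesic. Alexander's proof then constructs a set of ``good increments'' $Q_x$, shows $\alpha x$ lies in $\conv(Q_x)$, and runs a dyadic induction that gains a bounded amount per scale. There is no role for a CDI-type suboptimal rate anywhere in that argument, and in fact a $\varepsilon^{1/3}$-type estimate would not feed naturally into it. A secondary point: your ``free lower bound'' $g\ge h-C$ via a Lipschitz corrector is correct but heavier than needed; the paper gets $\overline f\le f$ (equivalently $g\ge h$) for free from subadditivity alone, no correctors required. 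I would rewrite the final paragraph to drop the CDI step entirely, replace it with the verification of the approximate-geodesics hypothesis from the Lipschitz bound on minimizers, and then cite Alexander's deterministic theorem directly.
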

Additionally, in the case of dimension $d=2$, we provide a new proof of a result of Mitake\textendash{}Tran\textendash{}Yu.
\begin{thm}[Mitake\textendash{}Tran\textendash{}Yu~\cite{MTY}]
    If $d=2$, $H$ is convex in $p$, and $u_0$ is Lipschitz, then there is a constant $C = C(H, \Lip(u_0)) > 0$ such that, for all $t > 0$ and $x \in \R^d$, \[ |u^\varepsilon(t, x) - \overline{u}(t, x)| \leq C\varepsilon. \]
\end{thm}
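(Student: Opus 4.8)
The plan is to reduce the statement, via the optimal-control representation, to a ``metric''-type estimate on the unit cell; to prove the lower bound in that estimate using correctors (the argument of Theorem 1, which works in every dimension); and to supply the matching upper bound by a two-dimensional argument that takes the place of Alexander's theorem. Write $L=H^*$ for the Legendre transform of $H$ in $p$ (so $L$ is convex, lower semicontinuous, finite near the origin, with superlinear-type lower bounds coming from the coercivity of $H$), and recall the Lax\textendash{}Oleinik formula
\[ u^\varepsilon(t,x) = \inf\Big\{ u_0(\gamma(0)) + \int_0^t L\big(\tfrac{\gamma(s)}{\varepsilon},\dot\gamma(s)\big)\,ds \ :\ \gamma \text{ absolutely continuous},\ \gamma(t)=x\Big\}, \]
together with its effective analogue $\overline{u}(t,x) = \inf_{z\in\R^d}\{u_0(z) + t\,\overline{L}(\tfrac{x-z}{t})\}$, where $\overline{L}=\overline{H}^*$ and $\overline{H}$ is convex because $H$ is. Rescaling time by $\varepsilon$ shows $u^\varepsilon(t,x) = \inf_z\{u_0(z) + \varepsilon\, m(\tfrac{t}{\varepsilon};\tfrac{z}{\varepsilon},\tfrac{x}{\varepsilon})\}$, where $m(T;a,b) = \inf\{\int_0^T L(\gamma,\dot\gamma)\,ds : \gamma(0)=a,\ \gamma(T)=b\}$. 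Since $u_0$ and hence $\overline{u}$ are $\Lip(u_0)$\textendash{}Lipschitz in $x$, and the relevant infima are attained at $z$ with $|\tfrac{x-z}{t}|$ bounded, Theorem 2 follows once one shows $|m(T;a,b) - T\,\overline{L}(\tfrac{b-a}{T})| \le C$ uniformly in $a,b\in\R^2$ and $T>0$, with $q:=\tfrac{b-a}{T}$ ranging over a bounded set.

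For the lower bound $m(T;a,b) \ge T\,\overline{L}(q) - C$ I would argue as follows (this step is dimension-independent): fix a corrector $v_p$ solving \eqref{cell-problem}. Coercivity of $H$ together with $\overline{H}(p)\le C$ for $|p|\le\Lip(u_0)$ forces $|p+D_x v_p|\le C$ a.e., hence $\Lip(v_p)\le C$, hence (after normalizing the additive constant) $\|v_p\|_{L^\infty(\R^d/\Z^d)}\le C$, all uniformly over $|p|\le\Lip(u_0)$. The Fenchel\textendash{}Young inequality for $H$ at the point $(\gamma(s),\, p+D_x v_p(\gamma(s)))$ combined with the cell equation gives, in the viscosity sense, $\tfrac{d}{ds}[p\cdot\gamma(s)+v_p(\gamma(s))] \le L(\gamma(s),\dot\gamma(s)) + \overline{H}(p)$; integrating over $[0,T]$ along any admissible $\gamma$ and then taking the infimum over $\gamma$ and the supremum over the relevant $p$ yields $m(T;a,b) \ge T\,\overline{L}(q) - 2\sup_{|p|\le\Lip(u_0)}\|v_p\|_{L^\infty(\R^d/\Z^d)} \ge T\,\overline{L}(q) - C$.

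The real content is the upper bound $m(T;a,b) \le T\,\overline{L}(q) + C$, and this is where dimension two is used and where Alexander's theorem\textemdash{}which in general dimension costs an extra logarithm\textemdash{}is replaced by planarity. The idea is to build a near-minimizing trajectory out of the effective dynamics: after an $O(1)$-cost surgery at the two ends, reduce to $a=0$ and $b\in\Z^2$, and then travel for time close to $T$ with per-unit-time cost $\overline{L}(q)$ and only $O(1)$ total excess. In general dimension the Mather-minimizing motion with rotation vector $q$ need only be approached by splicing periodic orbits of unboundedly growing period, and accounting for the splicing cost is exactly what produces the $\log$. In $\R^2$, however, minimizing trajectories have the non-crossing property\textemdash{}their projections to $\R^2/\Z^2$ are simple, and two minimizers in the same homology class are disjoint or coincide\textemdash{}so by the theory of minimal orbits on the two-torus (Hedlund, Aubry\textendash{}Mather, Bangert) the minimizing set for the asymptotic direction of $q$ is carried by a lamination of $\R^2/\Z^2$ by ``monotone'' bi-infinite minimizers, each drifting at rate $q$. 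I would then pick a leaf $\Gamma$ of this lamination passing within distance $\sqrt{2}$ of the origin, get from $0$ onto $\Gamma$ in unit time at cost $\le C$, ride the minimizing parametrization of $\Gamma$ for time $\approx T$ (incurring cost $T\,\overline{L}(q)+O(1)$ and landing at $b+O(1)$, since the drift is $q$ up to a bounded error), and close up the remaining $O(1)$ discrepancy in position and time by a final $O(1)$-cost surgery. For $q$ in an irrational direction, where the minimal subset of the lamination is a Denjoy-type set, I would obtain the estimate by passing to the limit along rational approximants, using the uniformity of the bound just established. Combining with the lower bound gives $|m(T;a,b)-T\,\overline{L}(q)|\le C$, hence Theorem 2.

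The main obstacle is the construction in the previous paragraph: making the two-dimensional non-crossing / Aubry\textendash{}Mather picture quantitative and \emph{uniform in the rotation vector}, so that getting on and off the invariant lamination and matching the prescribed endpoints and time cost an absolute constant rather than something growing (even logarithmically) with $T$. In particular one must deal with merely continuous, coercive, convex $H$, where Tonelli\textendash{}Mather theory is not directly available, presumably by an approximation argument, and one must control irrational rotation directions uniformly. Everything else\textemdash{}the Lax\textendash{}Oleinik reduction, the corrector-based lower bound, and the $O(1)$ endpoint surgeries\textemdash{}is routine.
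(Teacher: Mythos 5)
Your approach diverges from the paper's at the decisive step, and leaves the hard inequality as an acknowledged gap rather than a proof. Two things are worth noting.

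First, your ``lower bound'' $m(T;a,b) \geq T\overline{L}(q) - C$ is the easy direction, and the corrector machinery is unnecessary: since $m$ is subadditive (concatenate paths), the limit $\overline{m}(t,x,y) := \lim_n n^{-1}m(nt, nx, ny)$ satisfies $\overline{m} \leq m$ with no error term at all. All the content of the theorem is in the reverse inequality $m \leq \overline{m} + C$.

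For that hard direction you propose riding an Aubry--Mather invariant lamination of the two-torus with $O(1)$ on-ramp and off-ramp surgeries --- essentially the weak-KAM route of Mitake--Tran--Yu. You correctly identify the obstacle yourself: making the lamination picture quantitative, uniform across all rotation vectors (including irrational ones via Denjoy-type sets), and applicable to merely continuous, non-Tonelli $H$ is the entire difficulty, and you leave it unresolved. This is not a deferrable technicality; it \emph{is} the proof, and as written the upper bound is not established.

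The paper replaces all of this with an elementary topological argument. Lemma~\ref{approx-additivity} proves approximate superadditivity $2m(t, 0, x) \leq m(2t, 0, 2x) + C$ by splitting a minimizer $\gamma \in \Gamma(2t, 0, 2x)$ into its two halves, lifting them to space-time graphs $\eta^1, \eta^2 \colon [0,t] \to \R \times \R^2$, and showing that suitable cyclic shifts $\eta^{1,c_1}, \eta^{2,c_2}$ must intersect at some common parameter: otherwise the normalized difference defines a map $[0,t] \to S^1$ whose winding number would have to be both $-1/2$ and $+1/2$ as $(c_1,c_2)$ varies continuously, a contradiction. Splicing the two halves at the intersection yields a path from $0$ to $x$ in time $t$ assembled from at most four non-overlapping pieces of $\gamma$, hence of cost $\leq \frac{1}{2}m(2t,0,2x) + C$. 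Combined with subadditivity and iterated dyadically, this gives $|m - \overline{m}| \leq C$ directly, with no correctors, no Mather theory, and no case distinction over rotation vectors. Planarity enters only through the winding number, which is far more robust than the quantitative lamination structure your plan requires. If you want to pursue a route closer to yours, you would essentially be reconstructing the original Mitake--Tran--Yu argument, whose difficulties are precisely the ones you flagged.
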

The proofs exploit the control formulation of the initial value problem~(\ref{eps-cauchy}), which reduces homogenization to a question about convergence of a subadditive function. In both the $d=2$ and $d \geq 3$ case, results of Alexander~\cite{Alexander97}~\cite{Alexander90} apply to quantify the convergence.

Two months after we posted this article, Hung Tran and Yifeng Yu pointed out that, by replacing Step 1 in our proof of Lemma~\ref{approx-additivity} with Lemma 2 of Burago~\cite{Burago}, one obtains the optimal $O(\varepsilon)$ rate in all dimensions. In fact, this key lemma is exactly the Hobby\textendash{}Rice theorem~\cite{HobbyRice}, proved in 1965.

\section{Prior work}
After Lions\textendash{}Papanicolaou\textendash{}Varadhan proved qualitative homogenization, there have been two main quantitative results. Under the assumptions that $u_0$ is Lipschitz and $H$ is locally Lipschitz, Capuzzo-Dolcetta\textendash{}Ishii~\cite{CDI} proved a rate of $O(\varepsilon^{1/3})$, using the perturbed test function method with approximate correctors. Under the additional assumption that $H$ is convex in $p$, Mitake\textendash{}Tran\textendash{}Yu~\cite{MTY} proved a rate of $O(\varepsilon)$ in dimension $d=2$ and a rate of $O(\varepsilon^{1/2})$ in dimensions $d \geq 3$ using weak KAM theory.

From the definition~(\ref{cell-problem}) of $\overline{H}$, we can heuristically hope for the expansion \[ u^\varepsilon(t, x) \approx \overline{u}(t, x) + \varepsilon v_{D_x \overline{u}(t, x)}(\varepsilon^{-1}x), \] which suggests a rate of $O(\varepsilon)$. However, the correctors are not unique, $u$ is not $C^1$ but only Lipschitz, and a continuous selection $p \mapsto v_p$ of correctors (let alone a Lipschitz selection) does not exist in general (see section 5 of~\cite{MTY} for an example). The assumptions on the initial data and the Hamiltonian help by giving additional structure to the problem, in the form of the control formulation.

\section{Subadditive convergence}
We begin by presenting a result of Alexander. In this section, we let $\Omega \subseteq \R^N$ denote an open convex cone. First, we make a few definitions.
\begin{defn}
    A function $f \colon \Omega \cap \Z^N \to \R_{\geq 0}$ has \textit{approximate geodesics} if there is a constant $K > 0$ such that, for every $x \in \Omega \cap \Z^N$, there are $x_0, x_1, \dots, x_n \in \Omega \cap \Z^N$ with $x_0 = 0$, $x_n = x$, $x_{i+1}-x_i \in \Omega$, $|x_{i+1}-x_i| \leq K$, and \[ |f(x_k-x_i) - f(x_k-x_j) - f(x_j-x_i)| \leq K \] for all $i \leq j \leq k$.
\end{defn}
\begin{defn}
    A function $f \colon \Omega \cap \Z^N \to \R_{\geq 0}$ is subadditive if $f(x+y) \leq f(x) + f(y)$ for all $x, y \in \Omega \cap \Z^N$.
\end{defn}
\begin{defn}
    A function $f \colon \Omega \cap \Z^N \to \R_{\geq 0}$ has linear growth if there is a constant $K \geq 1$ such that $K^{-1}|x| - K \leq f(x) \leq K|x| + K$ for all $x \in \Omega \cap \Z^N$.
\end{defn}
\begin{thm}[Alexander~\cite{Alexander97}]
    If $f \colon \Omega \cap \Z^N \to \R_{\geq 0}$ is subadditive, has linear growth, and has approximate geodesics, then there is a constant $C > 0$ such that, for all $x \in \Omega \cap \Z^N$, \[ |f(x) - \lim_{n \to \infty} n^{-1}f(nx)| \leq C\log(C + |x|). \]
\end{thm}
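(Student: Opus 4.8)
The plan is to prove the two bounds $f(x)\ge\mu(x)$ and $f(x)\le\mu(x)+C\log(C+|x|)$ separately, where $\mu(x):=\lim_{n\to\infty}n^{-1}f(nx)$. First I would record the structure of $\mu$. Since $n\mapsto f(nx)$ is a nonnegative subadditive sequence, Fekete's lemma gives both that the limit exists and that $\mu(x)=\inf_{n\ge1}n^{-1}f(nx)$; taking $n=1$ yields $\mu(x)\le f(x)$, so the lower bound is immediate. Linear growth passes to the limit, giving $K^{-1}|x|\le\mu(x)\le K|x|$, and $\mu$ is subadditive and positively $1$-homogeneous (being a pointwise limit of such) on the rational points of $\Omega$, hence extends to a convex, positively $1$-homogeneous function on $\Omega$; a finite convex function bounded on a ball is locally Lipschitz, and $1$-homogeneity upgrades this to a global bound $\mu(y)\le\mu(z)+L|y-z|$ on the cone. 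Thus the whole content is the upper bound, equivalently the estimate $D(r)\le C\log(C+r)$ for $D(r):=\sup\{f(x)-\mu(x):x\in\Omega\cap\Z^N,\ |x|\le r\}\ge0$.

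The obstacle that organizes everything is that the naive approach recovers only linear growth: splitting $x=y_1+y_2$ with $|y_i|\approx|x|/2$ and using $f(x)\le f(y_1)+f(y_2)$ together with $\mu(y_1)+\mu(y_2)\le\mu(x)+O(1)$ (from the Lipschitz bound) yields $D(r)\le2D(r/2)+O(1)$, hence $D(r)=O(r)$, no improvement over linear growth. To do better one must, at almost every scale, pay the \emph{exact} cost $\mu$ of a piece rather than recursing on it, and this is feasible only along directions in which $\mu$ is (nearly) linear, hence additive. A clean model of the gain is the following ray estimate, which I would prove first: if $\xi\in\Omega\cap\Z^N$ is such that $f$ is approximately additive along the ray $\{k\xi:k\ge0\}$, say $|f((a+b)\xi)-f(a\xi)-f(b\xi)|\le K$, then $e_k:=f(k\xi)-k\mu(\xi)\ge0$ satisfies $e_{a+b}\ge e_a+e_b-K$, so $e_k-K$ is superadditive in $k$; since $e_k/k=f(k\xi)/k-\mu(\xi)\to0$, the superadditive form of Fekete's lemma forces $e_k-K\le0$, i.e.\ $f(k\xi)\le k\mu(\xi)+K$ \emph{uniformly} in $k$. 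So along a direction where the geodesic structure is genuinely linear, the error is $O(1)$.

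It then remains to reduce an arbitrary $x$ to such directions, and here approximate geodesics and convex geometry enter. Using simultaneous Diophantine approximation (Dirichlet) on the ray through $x$, one produces a lattice point $\xi$ and an integer $m$ with $m$ comparable to $|x|$ and $|x-m\xi|$ as small as the approximation quality permits, chosen so that $\xi$ lies on (or extremely near) a supporting hyperplane of the convex body $\{\mu\le1\}$ at the point $x/\mu(x)$, so that $\mu$ is linear --- hence additive --- along the segment from $0$ to $x$ through $m\xi$. One then estimates $f(x)\le f(m\xi)+f(x-m\xi)$, bounds $f(m\xi)$ by the ray estimate (this is where the approximate-geodesic hypothesis is invoked, to certify approximate additivity along $\{k\xi\}$ for the selected $\xi$), uses $\mu(m\xi)=\mu(x)+O(|x-m\xi|)$ by the Lipschitz bound, and absorbs $f(x-m\xi)$ either into an $O(1)$ term or into a single recursive call at a strictly smaller scale. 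The logarithm emerges from the multiscale bookkeeping, which operates at scales polynomial in $|x|$ and so contributes $O(\log|x|)$ in total, rather than the $O(|x|)$ that naive summation of bounded defects over every leaf of a recursion would give.

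The main obstacle is the reconciliation in the previous paragraph: the hypothesis of approximate geodesics guarantees only that \emph{some} chain from $0$ to a given point has bounded additivity defect, with no control on its geometry, whereas the argument wants a decomposition that follows a direction along which $\mu$ is (nearly) exactly additive, so that the $\mu$-costs telescope to $\mu(x)+O(\log|x|)$ instead of accumulating linearly. The hard case is when $\{\mu\le1\}$ has no flat face near $x/\mu(x)$ --- a genuinely strictly convex limit shape --- since then $\xi$ can only be chosen with $\mu$ \emph{approximately} linear along the segment, and controlling that error against the size of the Diophantine denominator is the real work; the polyhedral case is comparatively soft. Routine matters --- the cone boundary (when $x-m\xi\notin\Omega$, slide $x$ slightly into the interior first), small-$|x|$ cases, and the dependence of $C$ on $K$ and $N$ --- I would dispatch at the end.
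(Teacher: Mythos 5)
Your preliminaries are fine --- the lower bound $f\ge\mu$ from Fekete, the homogeneity, subadditivity, and Lipschitz continuity of $\mu$ on the cone --- and the ray estimate is a correct and appealing lemma: if the additivity defect along $\{k\xi\}$ is uniformly at most $K$, then $e_k:=f(k\xi)-k\mu(\xi)\ge0$ has $e_k-K$ superadditive with $e_k/k\to0$, hence $e_k\le K$. But the bridge from the hypotheses of the theorem to the hypothesis of that lemma is never built, and it is the crux. The approximate-geodesics assumption furnishes, for each target $w$, \emph{some} chain $x_0,\dots,x_n$ from $0$ to $w$ with bounded triangle defects $|f(x_k-x_i)-f(x_k-x_j)-f(x_j-x_i)|\le K$; it gives no control whatsoever on where the chain points lie, and in particular no reason that any of them sit on (or near) a chosen ray $\{k\xi\}$. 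The Diophantine step selects a direction $\xi$ with good arithmetic and, in the polyhedral case, good convex-geometric properties, but it does nothing to "certify approximate additivity along $\{k\xi\}$" --- that phrase in your sketch is an assertion, not a deduction. You yourself flag that controlling the error when $\{\mu\le1\}$ is strictly convex near $x/\mu(x)$ "is the real work" and do not carry it out. So the proposal is a plan with a genuine gap exactly where the theorem is hard.

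For contrast, the paper's proof (following Alexander) avoids rays and Diophantine approximation entirely, and this is worth internalizing because it shows the flat-face issue you worry about is a red herring. The device is the supporting linear functional $\overline{f}_x$ of $\overline{f}=\mu$ at $x$ (normalized so $\overline{f}_{tx}=\overline{f}_x$), together with the set of ``good increments'' $Q_x:=\{v\in\Omega\cap\Z^N: f(v)\le\overline{f}_x(v)+5K^2,\ \overline{f}_x(v)\le\overline{f}(x)\}$. Step 1 takes an approximate geodesic to $nx$ for large $n$ and greedily extracts a subsequence $y_0,\dots,y_p$ whose successive increments lie in $Q_x$; the approximate-geodesic bound is used both to show each greedy step can advance at least one index (so the subsequence reaches $nx$) and to bound the count $p$ between $n$ and $Cn$, whence $\alpha x\in\conv Q_x$ for some $\alpha\in[c,1]$. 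Step 2 applies Carath\'eodory to write $\alpha x$ as a convex combination of at most $N+1$ good increments, then writes $tx$ as an integer combination of those increments plus a remainder $z$ with $|z|\le C|x|$; subadditivity and the defining inequality of $Q_x$ give $f(tx)-\overline{f}(tx)\le f(z)-\overline{f}(z)+Ct$. Step 3 iterates this over geometrically growing scales, yielding the logarithm. The point is that ``additivity along the right direction'' is encoded not by a flat face of $\{\mu\le1\}$ nor by an integer ray, but by the linearity of $\overline{f}_x$ applied to increments whose $f$-cost is within $O(1)$ of their $\overline{f}_x$-value; the approximate geodesic is used purely as a supply of such increments. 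That reframing is what lets the argument close without any arithmetic of $\xi$.
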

\begin{proof}
    Without loss of generality, we assume $K \geq 2$. Define $\overline{f} \colon \Omega \cap \Q^N \to \R$ by \[ \overline{f}(x) := \lim_{n \to \infty} n^{-1}f([nx]), \] where $[\cdot]$ denotes coordinate-wise rounding to integers. Then $\overline{f}$ is also subadditive with linear growth. From the scaling, it is immediate that $t\overline{f}(x) = \overline{f}(tx)$ for all $t \geq 0$. From subadditivity of $f$, we see that $\overline{f} \leq f$.

    For the rest of the argument, we let $C > 1 > c > 0$ be constants which depend only on $K$ and $N$ and may differ from line to line.

    For each $x \in \Omega \cap \Q^N$, define $\overline{f}_x$ to be a supporting affine functional of $\overline{f}$ at $x$, chosen consistently so that $\overline{f}_{tx} = \overline{f}_x$ for all $t > 0$. We think of $\overline{f}_x(v)$ as the amount of progress that a step $v$ makes in the direction $x$. Given $x \in \Omega \cap \Q^N$, define the set of ``good'' increments \[ Q_x = \left\{v \in \Omega \cap \Z^N \mid f(v) - 5K^2 \leq \overline{f}_x(v) \leq \overline{f}(x)\right\}. \] We think of $f(v)-\overline{f}_x(v)$ as the amount of inefficiency in the increment $v$ on a path toward $x$, so a good increment is one which has inefficiency at most $5K^2$. The second part of the inequality means that good increments don't ``overshoot'' in the direction of $x$, which implies (from linear growth) that good increments have length at most $C|x|$.

    \textit{Step 1.} We show that if $x \in \Omega \cap \Q^N$ with $|x| \geq C$, then there is $\alpha \in [c, 1]$ such that $\alpha x$ lies in the convex hull of $Q_x$. Let $n \in \N$ be large enough so that \[ |n^{-1}f(nx) - \overline{f}(x)| \leq 1. \] Let $x_0, x_1, \dots, x_m$ be an approximate geodesic for $nx$. We iteratively define a subsequence $y_k = x_{j_k}$ by letting $j_0 = 0$ and, as long as $j_k < m$, we define $j_{k+1} \in [j_k+1, \dots, m]$ to be maximal such that $y_{k+1}-y_k \in Q_x$. By linear growth of $f$ and $\overline{f}$, we have \[ \overline{f}_x(x_{j_k+1}-x_{j_k}) - 5K^2 \leq K^2 + K - 5K^2 \leq f(x_{j_k+1}-x_{j_k}) \leq K^2 + K \leq K^{-1}C - K \leq \overline{f}(x), \] as long as $C$ was chosen large enough, so $j_k+1$ is admissible and therefore the subsequence exists, and we let $p \in \N$ be the index where $j_p = m$. If $k$ is such that $j_{k+1} < m$ and $\overline{f}_x(x_{j_{k+1}+1}-x_{j_k}) > \overline{f}(x)$, then the fact that $|x| \geq C$, linear growth, and the approximate geodesic property yields \[ f(y_{k+1}-y_k) \geq (K^{-1}|x| - K) - (K^2+2K). \] Choosing $C$ large enough and summing over $k$ (using the approximate geodesic property again) shows that there are $O(n)$ many such $k$.

    On the other hand, let $\ell$ be the number of $k$ such that $j_{k+1} < m$ and \[ f(x_{j_{k+1}+1}-x_{j_k}) - 5K^2 > \overline{f}_x(x_{j_{k+1}+1}-x_{j_k}). \] For such $k$, we have \[ \overline{f}_x(y_{k+1}-y_k) \leq f(y_{k+1}-y_k) - 5K^2 + 2(K^2+K) \leq f(y_{k+1}-y_k) - K^2. \] Linearity of $\overline{f}_x$  and the approximate geodesic property shows that \[ \overline{f}_x(nx) = n\overline{f}(x) \leq f(nx) + pK - \ell K^2, \] so the choice of $n$ implies that $\ell K^2 - pK \leq n$ and therefore $\ell \leq \frac14 n + \frac12 p$. All together, we have shown that $p \leq Cn$. We conclude this step by noting that \[ x = \frac{1}{n}\sum_{k=1}^p (y_k-y_{k-1}), \] and $n \leq p \leq Cn$ (the first part of the inequality follows from applying $\overline{f}_x$ to both sides of the equation).

    \textit{Step 2.} We show that if $x \in \Omega \cap \Q^N$, $|x| \geq C$, $t \geq 1$, and $tx \in \Z^N$, then there is a $z \in \Omega \cap \Z^N$ with \[ f(tx) - \overline{f}(tx) \leq f(z) - \overline{f}(z) + Ct. \] Using the previous step, write $tx = z + \sum_{k=1}^m v_k$, where $|z| \leq C|x|$, $\overline{f}(z) \leq \overline{f}_x(z) + C$, $v_k \in Q_x$, and $m \leq Ct$. Indeed, for some $\alpha \in [c, 1]$ we first write \[ \alpha x = \sum_{i=1}^{N+1} p_i v_i, \] where $v_i \in Q_x$ and $p_i \geq 0$, $\sum_i p_i = 1$. Note that the sum only requires $N+1$ terms by Caratheodory's theorem on convex hulls, since we are working in $\R^N$. To decompose $tx$, we write \[ tx = \sum_{i=1}^{N+1}(t\alpha^{-1}p_i - \lfloor t\alpha^{-1}p_i \rfloor)v_i + \sum_{i=1}^{N+1}\lfloor t\alpha^{-1}p_i \rfloor v_i =: z + (tx-z), \] so $z$ satisfies the required properties. By subadditivity of $f$ and linearity of $\overline{f}_x$,
    \begin{align*}
        f(tx) &\leq f(z) + \sum_{k=1}^m f(v_k)\\
        &\leq f(z) + \sum_{k=1}^m \left(\overline{f}_x(v_k) + 5K^2\right)\\
        &= f(z) + \overline{f}_x(tx - z) + 5CK^2t\\
        &\leq f(z) + \overline{f}_x(tx-z) + Ct.
    \end{align*}
    Finally, we write $\overline{f}(tx) = \overline{f}_x(z) + \overline{f}_x(tx-z)$ and subtract from both sides of the inequality above to get \[ f(tx) - \overline{f}(tx) \leq f(z) - \overline{f}(z) + Ct, \] where we used the fact that $\overline{f}(z) \leq \overline{f_x}(z) + C$.

    \textit{Step 3.} For some large $M > 1$, the previous step yields \[ \sup_{|x| \leq M^{k+1}C} f(x)-\overline{f}(x)  \leq \sup_{|x| \leq M^k C} f(x)-\overline{f}(x) + CM. \] We conclude by induction on $k$.
\end{proof}
\section{Homogenization via the metric problem}
Let $C > 1 > c > 0$ denote constants which depend on $H$ and $\Lip(u_0)$ and may differ from line to line. If $a \in \R$, then replacing $H$ by $H-a$ replaces solutions $u^\varepsilon$ by $u^\varepsilon + ta$, so we lose no generality in assuming that $H(x, 0) \leq -1$ for all $x \in \R^d$. It is well-known (see, e.g. Theorem 1.34 from~\cite{Tran-book}) that the solutions $u^\varepsilon$ are Lipschitz, with bound $\Lip(u^\varepsilon) \leq C$ independent of $\varepsilon$. In particular, only the values of $H(x, p)$ for $|p| \leq C$ are needed to solve the initial-value problem~(\ref{eps-cauchy}). Therefore, we lose no generality in assuming that $H(x, p) = |p|^2$ for $|p| \geq C$. We write $L(x, v)$ to denote the Lagrangian \[ L(x, v) := \sup_{p \in \R^d} p \cdot v - H(x, p), \] which we use to define the metric
\begin{equation}\label{eps-metric}
    m(t, x, y) := \inf_{\gamma \in \Gamma(t, x, y)} \int_0^t L(\gamma(s), \gamma'(s)) \; ds,
\end{equation}
where $\Gamma(t, x, y)$ is the set of paths $\gamma \in W^{1,1}([0, t]; \R^d)$ with $\gamma(0) = x$ and $\gamma(t) = y$. We also define the homogeneous metric
\begin{equation}\label{macro-metric}
    \overline{m}(t, x, y) := \lim_{n \to \infty} n^{-1}m(nt, nx, ny).
\end{equation}
Given a path $\gamma \in \Gamma(t, x, y)$, we refer to $\int_0^t L(\gamma(s), \gamma'(s)) \; ds$ as the cost of $\gamma$. Noting that the assumption on $H$ implies that $L(x, v) = |v|^2$ for $|v| \geq C$, it is a standard fact that a minimizer $\gamma \in \Gamma(t, x, y)$ exists for the infimum in equation~(\ref{eps-metric}) which satisfies
\begin{equation}\label{speed-limit}
    \Lip(\gamma) \leq C + Ct^{-1}|x-y|.
\end{equation}
The optimal control formulation of~(\ref{eps-cauchy}) is
\begin{equation}\label{eps-rep}
    u^\varepsilon(t, y) = \inf_{|x-y| \leq Ct} u_0(x) + \varepsilon m(\varepsilon^{-1}t, \varepsilon^{-1}x, \varepsilon^{-1}y).
\end{equation}
Define the cone $\Omega := \{(t, x) \in \R_{\geq 0} \times \R^d \mid |x| \leq Ct\}$. For any $(t, y-x) \in \Omega$, we have \[ |m(t, x, y) - m(\lceil t \rceil, [x], [y])| \leq C, \] where $[\cdot]$ denotes coordinate-wise rounding to integers in a way that stays inside $\Omega$. By $\Z^d$-periodicity, \[ m(\lceil t \rceil, [x], [y]) = m(\lceil t \rceil, 0, [y]-[x]). \] The Lipschitz estimate~(\ref{speed-limit}) for minimizers shows that $f(t, x) := m(t, 0, x)$ has approximate geodesics. Indeed, we find an approximate geodesic by chopping up a minimizing path, and the Lipschitz estimate~(\ref{speed-limit}) shows that the pieces lie in $\Omega$. Since $L(x, v) \geq 1$ for all $x, v \in \R^d$, it is clear that $f$ has linear growth (when restricted to $\Omega$) and is subadditive and nonnegative. We finish by applying Alexander's theorem, which yields \[ |\varepsilon m(\varepsilon^{-1}t, \varepsilon^{-1}x, \varepsilon^{-1}y) - \overline{m}(t, x, y) \leq C\varepsilon \log (C + \varepsilon^{-1}t + \varepsilon^{-1}|x-y|), \] for all $(t, x, y)$ with $(t, y-x) \in \Omega$. The main result follows.

\section{The case $d=2$}
In this section, we assume $d=2$. Rather than working with approximate geodesics as before, it will be more convenient to work directly with the minimizers for the metric $m$. We follow the same method as Alexander~\cite{Alexander90}, who proved an analogous result in the context of Bernoulli percolation. We first show that $m$ is approximately superadditive.
\begin{lem}\label{approx-additivity}
    If $(t, x) \in \Omega \cap \Z^{d+1}$, then $2m(t, 0, x) \leq m(2t, 2x) + C$.
\end{lem}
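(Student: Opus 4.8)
The plan is to work directly with the minimizing paths for $m$, as announced at the start of this section. Let $\gamma \colon [0, 2t] \to \R^2$ be a minimizer realizing $m(2t, 0, 2x)$; by~(\ref{speed-limit}) we have $\Lip(\gamma) \leq C$. The aim is to cut $\gamma$ into two journeys, disjoint in time, each costing at least $m(t, 0, x) - C$; the device for recognizing a sub-journey as one from $0$ to $x$ is the $\Z^2$-periodicity of $L$. Suppose one can locate a time $a \in [0, t]$ with $\gamma(a + t) - \gamma(a) = x$. Then $\gamma|_{[a, a+t]}$ is a path of duration $t$ from $\gamma(a)$ to $\gamma(a) + x$, so its cost is at least $m(t, \gamma(a), \gamma(a) + x) \geq m(t, 0, x) - C$, because rounding both endpoints to the lattice changes the metric by $O(1)$ --- exactly the estimate established in the previous section. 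Moreover, translating $\gamma|_{[a+t, 2t]}$ by the lattice vector $-x$ leaves its cost unchanged, and since $\gamma(a + t) - x = \gamma(a)$ the translate attaches continuously to $\gamma|_{[0, a]}$; the resulting path runs from $0$ to $x$ in duration $a + (t - a) = t$ and has cost equal to that of $\gamma|_{[0,a]}$ plus that of $\gamma|_{[a+t, 2t]}$, in particular at least $m(t,0,x)$. The two costs --- of $\gamma|_{[a,a+t]}$ and of this reassembled path --- therefore sum to at least $2m(t,0,x) - C$, while they also sum to the total cost of $\gamma$, namely $m(2t, 0, 2x)$; hence $2m(t, 0, x) \leq m(2t, 0, 2x) + C$, which is the lemma.

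Thus the entire argument reduces to producing such a time $a$ --- call this ``Step 1.'' (One can afford an $O(1)$ slack here: it suffices that $\gamma(a+t) - \gamma(a)$ be within $O(1)$ of $x$ and that the two arc durations be within $O(1)$ of $t$, since $m$ varies by $O(1)$ under unit-scale changes of its time and space arguments, a routine consequence of the estimates of the previous section.) Consider the continuous curve $\phi \colon [0, t] \to \R^2$ given by $\phi(a) := \gamma(a+t) - \gamma(a)$. Its two endpoints, $\phi(0) = \gamma(t)$ and $\phi(t) = 2x - \gamma(t)$, are symmetric about $x$. In one space dimension this would already force $\phi$ to pass through $x$ by the intermediate value theorem; but in two dimensions $\phi$ can wind around $x$ and stay bounded away from it, so Step 1 cannot be purely topological and must use the minimality of $\gamma$ --- a path that makes such a detour is wasteful, and one wants to contradict optimality. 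I expect Step 1 to be the main obstacle; the rest of the proof is bookkeeping with periodicity and the metric estimates already available.

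For Step 1 in the case $d = 2$ I would work in the plane, following Alexander~\cite{Alexander90}: examine the intersections of $\gamma$ with the $\Z^2$-translates of a fixed minimizing path from $0$ to $x$, and use the Jordan curve theorem --- here two-dimensionality enters essentially --- to show that if no time-$t$ chord of $\gamma$ lies near a lattice translate of $x$, then $\gamma$ encircles a region across which it can be rerouted more cheaply, contradicting minimality; the pattern of intersections then exhibits the required cut. As the authors note at the end of the introduction, this topological step has a dimension-free substitute: applying the Hobby\textendash{}Rice theorem~\cite{HobbyRice} --- which is Lemma~2 of Burago~\cite{Burago} --- to the $d$ coordinate functions of $\gamma'$ together with the constant function on $[0,2t]$ yields a partition of $[0,2t]$ into boundedly many intervals, two-colored so that each color has total duration $t$ and total $\gamma$-displacement $x$, which is exactly the balanced cut the argument needs. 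The bound on the number of pieces is what makes this usable, and it is why one works here with honest minimizers rather than the approximate geodesics of the earlier sections.
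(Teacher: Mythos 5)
Your top-level plan --- cut a minimizer $\gamma$ of $m(2t,0,2x)$ into boundedly many pieces and reassemble them, via $\Z^d$-periodicity, into two paths from $0$ to $x$ of duration $t$ --- is the paper's plan, and your Hobby\textendash{}Rice suggestion (applied to $\gamma_1',\gamma_2'$ and the constant $1$, giving $\leq 4$ two-colored intervals, each color with total duration $t$ and total displacement $x$) is exactly the Tran\textendash{}Yu improvement the paper credits at the end of its introduction. But as written the argument has a gap at each stage. For Step~1, the single-cut device $\phi(a)=\gamma(a+t)-\gamma(a)$ is, as you say yourself, inconclusive for $d\geq 2$; the two repairs you gesture at are one-sentence sketches, and neither matches the paper's actual $d=2$ argument, which is a third thing: it lifts the two halves of $\gamma$ to space-time curves $\eta^1,\eta^2\subset\R^3$ and shows that some pair of \emph{cyclic shifts} $\eta^{1,c_1},\eta^{2,c_2}$ must intersect, by a winding-number homotopy that compares the configurations $(\eta^{1,c_1}\subset H^-,\ \eta^{2,c_2}\subset H^+)$ and $(\eta^{1,c_1}\subset H^+,\ \eta^{2,c_2}\subset H^-)$. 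No rerouting or optimality-contradiction appears; the output is a $\leq 4$-piece decomposition.

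The more substantive gap is the reassembly. Your $2$-piece version glues exactly because the only translation needed, by $-x$, is a lattice vector; the cut costs nothing. But once a topological argument (yours or the paper's, or Hobby\textendash{}Rice) produces an interleaved decomposition into $3$ or more pieces, chaining non-adjacent pieces requires translating the later piece by a vector of the form $\gamma(z_i)-\gamma(z_j)$, which is generically not in $\Z^d$, so periodicity no longer preserves the cost of the translated piece. This is a different issue from the ``$O(1)$ slack'' you allow in where the cut lands, and it is not bookkeeping. The paper addresses it in its Step~2 by first replacing $\gamma$ with a discrete approximate geodesic through $\Z^{d+1}$ (using the Lipschitz bound~(\ref{speed-limit}) and the rounding estimate $|m(t,x,y)-m(\lceil t\rceil,[x],[y])|\leq C$), and \emph{only then} rearranging the segments; every reattachment is then a genuine lattice translation, and the rearranged path $\tilde\gamma\in\Gamma(2t,0,2x)$ with $\tilde\gamma(t)=x$ costs at most $C$ more than $\gamma$, which yields the lemma. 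Some such discretization needs to be made explicit before the proof is complete.
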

\begin{proof}
    Let $\gamma \in \Gamma(2t, 0, 2x)$ be a minimizing path.

    \textit{Step 1.} We show that we can form a path from $0$ to $x$ as the concatenation of at most $4$ non-overlapping segments of $\gamma$. Let $\gamma^1, \gamma^2 \colon [0, t] \to \R^d$ be the first and second halves of $\gamma$ respectively, given by \[ \gamma^1(s) := \gamma(s) \] and \[ \gamma^2(s) := \gamma(t+s)-\gamma(t) \] respectively. Then $\gamma^1(t) + \gamma^2(t) = 2x$, so $\gamma^1(t) = x - y$ and $\gamma^2(t) = x + y$ for some $y \in \R^d$. By a linear transformation of $\R \times \R^2$, we lose no generality in assuming that $x = 0$ and $y = (A, 0)$ for some $A > 0$. Consider the paths \[ \eta^1 \colon s \mapsto (s, \gamma^1(s) + (A, 0)) \] and \[ \eta^2 \colon s \mapsto (s, \gamma^2(s)), \]
    so $\eta^1(0) = (0, A, 0)$, $\eta^1(t) = (t, 0, 0)$, $\eta^2(0) = (0, 0, 0)$, and $\eta^2(t) = (t, A, 0)$.

    For $k \in \{1, 2\}$ and $c \in [0, t]$, we define the \textit{cyclic shift} \[ \eta^{k,c}(s) := \begin{cases} \eta^k(c+s)-\eta^k(c) &\quad \text{if $c+s \leq t$}\\ \eta^k(s-(t-c)) + \eta^k(t) - \eta^k(c) &\quad \text{otherwise.} \end{cases} \] We claim that some cyclic shifts of $\eta^1$ and $\eta^2$ intersect. Indeed, we can cyclically shift either path so that it is contained in the half-space $H^{\pm} := \{x \in \R^3 \mid \pm x \cdot (0, 0, 1) \geq 0\}$. The claim then follows from continuity, starting with $\eta^1$ in $H^-$ and $\eta^2$ in $H^+$, and cyclically shifting them into $H^+$ and $H^-$ respectively, as we will now explain in detail.

        Indeed, suppose that the cyclic shifts $\eta^{1,c_1}$ and $\eta^{2,c_2}$ do not intersect for any $c_1, c_2 \in [0, t]$. Then form the map $\varphi^{c_1, c_2} \colon [0, t] \to S^1$, where $S^1$ is the unit circle (identified in $\C = \R^2$ for concreteness) by \[ \varphi^{c_1, c_2}(s) := P\left(\frac{\eta^{1,c_1}(s)-\eta^{2,c_2}(s)}{|\eta^{1,c_1}(s)-\eta^{2,c_2}(s)|}\right), \] where $P(x, y, z) := (y, z)$ denotes projection onto the last two coordinates. Since the denominator is always nonzero, shifting $c_1$ and $c_2$ continuously produces a homotopy. As a homotopy invariant, the winding number of $\varphi^{c_1, c_2}$ is constant with respect to $c_1, c_2$. Choosing \[ c_1 := \argmax_c \eta^1(c) \cdot (0, 0, 1) \quad \text{and} \quad c_2 := \argmin_c \eta^2(c) \cdot (0, 0, 1) \] ensures $\eta^{1,c_1}(s) \in H^-$ and $\eta^{2,c_2}(s) \in H^+$ for all $s$. Since $(\eta^{1, c_1}(s) - \eta^{2, c_2}(s)) \cdot(0, 0, 1) \leq 0$, the map $\varphi^{c_1, c_2}$ is homotopic to $s \mapsto e^{-i\pi s/t}$, which has winding number $-1/2$. On the other hand, choosing \[ c_1 := \argmin_c \eta^1(c) \cdot (0, 0, 1) \quad \text{and} \quad c_2 := \argmax_c \eta^2(c) \cdot (0, 0, 1) \] makes $\varphi^{c_1, c_2}$ homotopic to $s \mapsto e^{i\pi s/t}$, which has winding number $1/2$, a contradiction.

        Finally, we form a new path following (a cyclic shift of) $\eta^2$ from $(0, 0, 0)$ to the point of intersection, and following $\eta^1$ the rest of the way to $(t, 0, 0)$.

        To summarize, we found a path from $0$ to $x$ which is composed of a segment of a cyclic shift of $\gamma^1$ and a segment of a cyclic shift of $\gamma^2$, so the segments don't overlap. Since we took cyclic shifts, this equates to at most $4$ segments from $\gamma$.

        \textit{Step 2.} Use Step 1 to find an approximate geodesic with subsequence \[ 0 = (t_0, x_0), (t_1, x_1), \dots, (t_9, x_9) = (2t, 2x) \] for $m$ along $\gamma$, such that there are indices $i_1, \dots, i_4$ with $\sum_{k=1}^4 (t_{i_k}-t_{i_k-1}, x_{i_k}-x_{i_k-1}) = (t, x)$. Rearranging the indices, we find a path $\tilde{\gamma} \in \Gamma(2t, 0, 2x)$ with $\tilde{\gamma}(t) = x$ and cost at most $C$ more than the cost of $\gamma$. The conclusion follows.
\end{proof}

The previous lemma and subadditivity show that \[ m(2t, 0, 2x) \leq 2m(t, 0, x) \leq m(2t, 0, 2x) + C \] for all $(t, x) \in \Omega \cap \Z^{d+1}$. Then $m(t, 0, x) - C \leq 2^{-k}(m(2^k t, 0, 2^k x) - C)$ for all $k \in \N$ by induction, so letting $k \to \infty$ shows \[ |m(t, x, y) - \overline{m}(t, x, y)| \leq C \] for all $(t, x) \in \Omega \cap \Z^{d+1}$, so the same holds for all $(t, x) \in \Omega$ since $m$ is Lipschitz. The result in dimension $d=2$ follows.

\section*{Acknowledgement} I would like to thank my advisor, Charles Smart, for many helpful discussions and comments on earlier drafts of this paper.
\printbibliography{}
\end{document}